\documentclass[11pt]{amsart}

\usepackage[utf8]{inputenc}
\usepackage{mathtools,amsfonts,amsthm,mathrsfs,amssymb}
\usepackage{textcomp}
\mathtoolsset{showonlyrefs}
\usepackage{bookmark}

\newtheorem{theorem}{Theorem}
\newtheorem{lemma}[theorem]{Lemma}

\newtheorem{conjecture}[theorem]{Conjecture}

\newcommand{\EE}{\mathbb{E}}

\newcommand{\bF}{\mathbf{F}}
\newcommand{\bI}{\mathbf{I}}
\newcommand{\bJ}{\mathbf{J}}
\newcommand{\cI}{\mathcal{I}}

\newcommand{\lam}{\lambda}
\renewcommand{\epsilon}{\varepsilon}
\newcommand{\eps}{\varepsilon}
\renewcommand{\Pr}{\mathbb{P}}
\newcommand{\RR}{\mathbb{R}}

%\date{\today}

\title[Occupancy fraction, fractional colouring, and triangles]{Occupancy fraction, fractional colouring, and triangle fraction}

\author[E.\ Davies]{Ewan Davies}
\address{Korteweg-De Vries Institute for Mathematics, University of Amsterdam, Netherlands.}
%\curraddr{}
\email{maths@ewandavies.org}
\thanks{(E.\ Davies) The research leading to these results has received funding from the European Research Council under the European Union's Seventh Framework Programme (FP7/2007--2013) / ERC grant agreement \textnumero{} 339109.}
\author[R.\ de Joannis de Verclos]{R\'{e}mi de Joannis de Verclos}
\address{Department of Mathematics, Radboud University Nijmegen, Netherlands.}
%\curraddr{}
\email{r.deverclos@math.ru.nl}
%\thanks{(R.\ de Joannis de Verclos) Supported by a Vidi grant (639.032.614) of the Netherlands Organisation for Scientific Research (NWO)}
\author[R. J.\ Kang]{Ross J. Kang}
\address{Department of Mathematics, Radboud University Nijmegen, Netherlands.}
%\curraddr{}
\email{ross.kang@gmail.com}
\thanks{(R.\ de Joannis de Verclos, R.\ J.\ Kang) Supported by a Vidi grant (639.032.614) of the Netherlands Organisation for Scientific Research (NWO)}
\author[F.\ Pirot]{Fran\c{c}ois Pirot}
\address{Department of Mathematics, Radboud University Nijmegen, Netherlands and  LORIA, Universit\'e de Lorraine, Nancy, France.}
%\curraddr{}
\email{francois.pirot@loria.fr}
%\thanks{}

\subjclass[2010]{Primary 05C35, 05D10; Secondary 05C15}

\keywords{Independent sets, fractional colouring, hard-core model}

\begin{document}
\begin{abstract}
Given $\eps>0$, there exists $f_0$ such that, if $f_0 \le f \le \Delta^2+1$, then for any graph $G$ on $n$ vertices of maximum degree $\Delta$ in which the neighbourhood of every vertex in $G$ spans at most $\Delta^2/f$ edges,
\begin{enumerate}
\item 
an independent set of $G$ drawn uniformly at random has at least $(1/2-\varepsilon)(n/\Delta)\log f$ vertices in expectation, and
\item
the fractional chromatic number of $G$ is at most $(2+\varepsilon)\Delta/\log f$.
\end{enumerate}
These bounds cannot in general be improved by more than a factor $2$ asymptotically.
One may view these as stronger versions of results of Ajtai, Koml\'{o}s and Szemer\'{e}di (1981) and Shearer (1983). The proofs use a tight analysis of the hard-core model.
\end{abstract}

\maketitle

\section{Introduction}\label{sec:intro}

%By a deletion argument, Ajtai, Koml\'os and Szemer\'edi~\cite{AKS81} noted a more general statement as corollary to their seminal bound on the independence number of triangle-free graphs. 
Ajtai, Koml\'{o}s and Szemer\'{e}di gave in 1981 a well-known and influential lower bound on the independence number of triangle-free graphs~\cite{AKS81}. 
They also showed the following statement which allows the graph to contain triangles, with a correspondingly weaker bound as the number of triangles grows.
There is some $C>0$ and some $f_0$ such that, in any graph on $n$ vertices of maximum degree $\Delta$ with at most $\Delta^2n/f$ triangles, where $f_0 <f< \Delta$, there is an independent set of size at least $C(n/\Delta)\log f$.
%In other words, an upper bound on the fraction of triangles in the graph yields a corresponding lower bound on independence number.
Somewhat later, Alon, Krivelevich and Sudakov~\cite{AKS99} proved a stronger version of this in terms of an upper bound on the chromatic number.
Recently, using a sophisticated ``stochastic local search'' framework, Achlioptas, Iliopoulos and Sinclair~\cite{AIS18+} tightened the result of~\cite{AKS99}, corresponding to a constant $C$ above of around $1/4$ in general\footnote{They also obtained an asymptotic estimate around $1/2$ for $f$ very close to $\Delta^2+1$.}. 
In fact, shortly after the work in~\cite{AKS81}, using a sharper bootstrapping from the triangle-free case, Shearer~\cite{She83} had improved the above statement on independence number essentially\footnote{The results in~\cite{AKS81,She83} are in terms of given average degree.} as follows.

\begin{theorem}[Shearer~{\cite[Equation (8)]{She83}}]\label{thm:trianglecount}
Given $\eps>0$, there exists $f_0$ such that, if $f_0 \le f \le \Delta^2/\eps^2$, then in any graph on $n$ vertices of maximum degree $\Delta$ with at most $\Delta^2n/f$ triangles, there is an independent set of size at least $(1/2-\eps)(n/\Delta)\log f$.
\end{theorem}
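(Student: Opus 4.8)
The plan is to derive Theorem~\ref{thm:trianglecount} from the occupancy-fraction lower bound of part~(i) by the deletion argument of Ajtai, Koml\'os and Szemer\'edi~\cite{AKS81}. The link is the identity $\sum_{v\in V(G)}e_G(N(v))=3t(G)$, where $t(G)$ is the number of triangles and $e_G(N(v))$ the number of edges spanned by the neighbourhood of $v$: a bound $t(G)\le\Delta^2n/f$ says exactly that an \emph{average} neighbourhood spans at most $3\Delta^2/f$ edges, so by Markov's inequality only a small fraction of vertices can have a much denser neighbourhood. Deleting those vertices yields a graph that is locally sparse in the sense of part~(i), and an independent set of a subgraph is an independent set of $G$.

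In detail, set $g:=\min\{\eps f/3,\ \Delta^2+1\}$ and $B:=\{v\in V(G):e_G(N(v))>\Delta^2/g\}$. Then $|B|\,\Delta^2/g<\sum_{v\in B}e_G(N(v))\le 3t(G)\le 3\Delta^2n/f$, so $|B|<3gn/f\le\eps n$ since $g\le\eps f/3$. The graph $H:=G-B$ has at least $(1-\eps)n$ vertices, maximum degree at most $\Delta$, and every neighbourhood of $H$ spans at most $\Delta^2/g$ edges; all of this is inherited monotonically under vertex deletion (the maximum degree can only drop, and the relevant hard-core occupancy is only helped by smaller degrees). If the threshold $f_0$ of Theorem~\ref{thm:trianglecount} is taken large enough in terms of $\eps$, then $g$ lies in the range $[f_1,\Delta^2+1]$ admissible in part~(i), where $f_1$ is its threshold — this also makes vacuous the degenerate case where $\Delta$ is so small that $\Delta^2+1<f_1$, since then $f\le\Delta^2/\eps^2$ is bounded. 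Applying part~(i) to $H$ with parameter $g$, an independent set of $H$ drawn uniformly at random has at least $(1/2-\eps)\bigl((1-\eps)n/\Delta\bigr)\log g$ vertices in expectation, so in particular $G$ contains an independent set of at least this size.

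It remains to see that $\log g\ge(1-\eps)\log f$. If $g=\eps f/3$, then $\log g=\log f-\log(3/\eps)\ge(1-\eps)\log f$ as soon as $f\ge(3/\eps)^{1/\eps}$. If instead $g=\Delta^2+1$, then $f\le\Delta^2/\eps^2$ gives $\log f\le\log(\Delta^2+1)+2\log(1/\eps)=\log g+2\log(1/\eps)$, and again $\log g\ge(1-\eps)\log f$ once $f$ is large in terms of $\eps$. Taking $f_0$ sufficiently large in $\eps$ validates both, and we conclude that $G$ has an independent set of size at least $(1/2-\eps)(1-\eps)^2(n/\Delta)\log f\ge(1/2-2\eps)(n/\Delta)\log f$; replacing $\eps$ by $\eps/2$ throughout gives the claimed bound.

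Given part~(i), the only delicate point is the bookkeeping at the upper endpoint $f\le\Delta^2/\eps^2$: part~(i) is available only for $f\le\Delta^2+1$ (with $\Delta^2+1$ the triangle-free case $e_G(N(v))=0$), so one cannot feed $\eps f/3$ into it when $f$ is large and must instead cap the local-sparsity parameter at the triangle-free value, absorbing the resulting $O(\log(1/\eps))$ additive deficit in $\log f$ into the error term — which is exactly why the hypothesis caps $f$ at $\Delta^2/\eps^2$ and not higher. Everything else is routine Markov and first-moment estimation; the substance is the tight hard-core analysis underlying part~(i), which is taken as given here.
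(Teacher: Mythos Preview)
Your argument is correct and follows essentially the same route as the paper: a Markov bound on $\sum_v e_G(N(v))=3t(G)$ to delete the few vertices lying in too many triangles, then an application of part~(i) to the locally sparse remainder. The only cosmetic difference is that the paper scales by $\eps^2$ rather than $\eps$, so that the hypothesis $f\le\Delta^2/\eps^2$ automatically forces $\eps^2 f\le\Delta^2+1$ and no explicit capping at the triangle-free value is needed; your $\min\{\eps f/3,\Delta^2+1\}$ achieves the same end with an extra case split.
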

\noindent
The case $f=\Delta^{2-o(1)}$ as $\Delta\to\infty$ includes the triangle-free case and yields the best to date asymptotic lower bound on the off-diagonal Ramsey numbers. 
The asymptotic factor $1/2$ cannot be improved above $1$, due to random regular graphs; see Section~\ref{sec:sharpness} for more details on sharpness.
%Even a removal of the $\eps$ term from the factor would constitute a significant advance.

Our main contribution is to give two stronger forms of Theorem~\ref{thm:trianglecount}, one on occupancy fraction (see Theorem~\ref{thm:occupancy} below), the other on fractional chromatic number, combining for the result promised in the abstract. %We show how either easily implies Theorem~\ref{thm:trianglecount} in Section~\ref{sec:sparsenbhds}.

\begin{theorem}\label{thm:main}
Given $\eps>0$, there exists $f_0$ such that, if $f_0 \le f \le \Delta^2+1$, then for any graph $G$ on $n$ vertices of maximum degree $\Delta$ in which the neighbourhood of every vertex in $G$ spans at most $\Delta^2/f$ edges,
\begin{enumerate}
\item\label{itm:occupancy}
an independent set of $G$ drawn uniformly at random has at least $(1/2-\varepsilon)(n/\Delta)\log f$ vertices in expectation, and
\item\label{itm:fractional}
the fractional chromatic number of $G$ is at most $(2+\varepsilon)\Delta/\log f$.
\end{enumerate}
\end{theorem}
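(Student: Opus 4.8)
The plan is to analyse the hard-core model on $G$ at a carefully chosen fugacity $\lambda$ and extract both statements from a single occupancy-fraction bound. Recall that the hard-core model assigns to each independent set $I$ of $G$ probability proportional to $\lambda^{|I|}$, and the \emph{occupancy fraction} is $\alpha_G(\lambda) = \EE|I|/n$, where $I$ is the random independent set so distributed. The uniform distribution on independent sets is the case $\lambda = 1$, and a standard fact is that $\alpha_G(\lambda)$ is monotone increasing in $\lambda$; so for part~\eqref{itm:occupancy} it suffices to prove $\alpha_G(1) \ge (1/2-\eps)(\log f)/\Delta$, and in fact I would prove the stronger statement that $\alpha_G(\lambda) \ge (1/2-\eps)(\log f)/\Delta$ for all $\lambda$ in a suitable range, then specialise. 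For part~\eqref{itm:fractional}, the link is the now-standard observation (going back to the local-occupancy/fractional-colouring machinery of Davies--Jenssen--Perkins--Roberts and its refinements) that a lower bound on the occupancy fraction that holds for every vertex's local view, uniformly over the graph, yields an upper bound on the fractional chromatic number: roughly, $\chi_f(G) \le 1/\min_v \Pr[v \in I \mid \text{boundary condition}]$ type statements, or more precisely one runs the argument that a good occupancy bound produces a probability distribution on independent sets covering each vertex with probability $\ge (2+\eps)^{-1}(\log f)/\Delta$, whence $\chi_f \le (2+\eps)\Delta/\log f$.

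The technical core is a local analysis at a vertex $v$. First I would fix $v$ and condition on the configuration of the hard-core model outside $N[v]$; write $\Lambda = N(v)$ and let $H = G[\Lambda]$ be the graph spanned by the neighbourhood, which by hypothesis has at most $\Delta^2/f$ edges. Conditioned on the outside, the probability that $v$ is occupied, given the restriction of the (conditional) model to $\Lambda$, is $\lambda/(\lambda + Z_{\Lambda'})$ where $\Lambda' \subseteq \Lambda$ is the set of neighbours of $v$ not already blocked from outside, and $Z_{\Lambda'}$ is the hard-core partition function of $H[\Lambda']$ at fugacity $\lambda$ (times the external weights). The quantity to control is $\EE[\log Z]$ or equivalently the expected number of occupied neighbours versus the probability $v$ is occupied. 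I would set up the key inequality relating, for the conditioned-local model on $N[v]$, the expected size of $I \cap N[v]$ to the probability $\Pr[v \in I]$; the triangle-fraction hypothesis enters because the number of edges in $H$ controls how much the neighbours of $v$ constrain each other, which in the sparse case makes $\Lambda$ behave almost like an independent set, for which the extremal computation is exact.

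I expect the main obstacle to be proving the tight local estimate: one needs to show that among all graphs $H$ on at most $\Delta$ vertices with at most $\Delta^2/f$ edges, the hard-core model (with arbitrary external fields from the conditioning) on $N[v]$ that minimises the per-vertex occupancy contribution is essentially the disjoint union of a clique-like dense part and an independent part, or that one can bound things by comparison to $H$ being a union of $\Delta^2/f$ disjoint edges plus isolated vertices. The honest route is a variational / convexity argument: parametrise by the marginal probabilities $p_u = \Pr[u \in I]$ for $u \in \Lambda$ together with the conditional occupancy of $v$, observe that $\sum_u p_u$ is what we want to lower bound (it contributes to the occupancy fraction via the edge $uv$), and that $\Pr[v \in I]$ is at least $\prod (1-p_u)$-type quantity $/(1+\lambda\cdot(\cdots))$ but is \emph{increased} by the presence of edges inside $\Lambda$ (edges in $H$ let neighbours exclude each other, freeing $v$); so the worst case is $H$ as sparse as the edge-budget allows in the relevant region. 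Making "sparse as possible subject to the budget, in the region that matters" precise, handling the interplay with the external conditioning, and then integrating the local bound over $v$ and optimising $\lambda \asymp f^{1/\Delta}$ (or the appropriate choice making $\log(1+\lambda)^\Delta \approx \tfrac12\log f$) to land the constant $1/2-\eps$ rather than $1/4-\eps$, is where the real work lies. The reduction of Theorem~\ref{thm:trianglecount} (a global triangle count) to the per-vertex neighbourhood-edge hypothesis of Theorem~\ref{thm:main}, promised for Section~\ref{sec:sparsenbhds}, should be a routine averaging/deletion argument removing the few vertices whose neighbourhoods are too dense.
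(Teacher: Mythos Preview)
Your high-level plan matches the paper's---hard-core model, local analysis at each vertex, monotonicity in $\lam$ for~\ref{itm:occupancy}, a local-occupancy-to-fractional-colouring lemma for~\ref{itm:fractional}---but the concrete steps you sketch would not deliver the constants. The paper does not look for an extremal neighbourhood graph $H$ with external fields; instead it introduces the single scalar $z_v=\EE|V(\bF_v)|$, the expected number of neighbours of $v$ \emph{externally uncovered} by $\bI$, and proves by convexity (Jensen applied twice) that
\[
\Pr(v\in\bI)\ge\frac{\lam}{1+\lam}(1+\lam)^{-z_v}
\qquad\text{and}\qquad
\EE|N(v)\cap\bI|\ge\frac{\lam}{1+\lam}\,z_v\,(1+\lam)^{-2\Delta^2/(fz_v)}\,.
\]
The edge budget $\Delta^2/f$ enters only through the average degree of $\bF_v$, which is at most $2\Delta^2/(f|V(\bF_v)|)$; no structural classification of $H$ is needed. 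Everything then reduces to a one-variable trade-off in $z$, solved via the Lambert $W$ function, and the occupancy fraction comes out asymptotically as $\frac{\lam}{1+\lam}\cdot\frac{W(\Delta\log(1+\lam))}{\Delta\log(1+\lam)}$.

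Two of your specifics would actually fail. Your fugacity, chosen so that $\Delta\log(1+\lam)\approx\tfrac12\log f$, yields $W(\Delta\log(1+\lam))\approx\log\log f$ and hence occupancy only of order $(\log\log f)/\Delta$; the paper instead takes $\Delta\log(1+\lam)=f^{1/(2+\eps/2)}$, so that $W(\Delta\log(1+\lam))\sim\frac{1}{2+\eps/2}\log f$, and this is precisely what produces the factor $1/2$. And for~\ref{itm:fractional}, the reduction ``cover each vertex with probability $\ge p$, hence $\chi_f\le 1/p$'' is unusable here: with the correct $\lam$, a vertex with $z_v$ near $\Delta$ has $\Pr(v\in\bI)$ exponentially small in a power of $f$. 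One needs the two-term criterion of Lemma~\ref{lem:chifalg}, namely that $\alpha\Pr(v\in\bI_H)+\beta\,\EE|N_H(v)\cap\bI_H|\ge 1$ for every induced $H$ and every $v$ gives $\chi_f(G)\le\alpha+\beta\Delta$. The two displayed inequalities feed this directly; optimising $\alpha,\beta$ so that the resulting convex function of $z$ has minimum value $1$ gives $\alpha+\beta\Delta=\frac{1+\lam}{\lam}(1+\lam)^{z}$ at the balancing $z$, which evaluates to $(2+\eps)\Delta/\log f$.
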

\noindent
Note that the assumption on the sparsity of the graph $G$ in Theorem~\ref{thm:main} is expressed differently from the one in Theorem~\ref{thm:trianglecount}; it has to be locally constrained. 
This is necessary for~\ref{itm:fractional} to hold, since a graph with at most $\Delta^2 n/f$ triangles can contain a clique of order $\Delta+1$ in the case $n/f \ge \Delta/6$, which is incompatible with~\ref{itm:fractional}. It is however unclear whether the same applies for~\ref{itm:occupancy}. 
The desire to reach more precise conclusions in terms of occupancy fraction and fractional chromatic number motivates strengthening ones assumptions from given average degree and total triangle count (as in~\cite{AKS81,She83}) to given maximum degree and bounded \emph{local} triangle count (as we do here, and in~\cite{AKS99}). Under these stronger assumptions our Theorem~\ref{thm:main} is easily seen to imply the conclusion of Theorem~\ref{thm:trianglecount}, but in fact by deleting vertices contained in too many triangles either conclusion of Theorem~\ref{thm:main} directly implies Theorem~\ref{thm:trianglecount}; we give this implication explicitly in Section~\ref{sec:sparsenbhds}.

The proof of Theorem~\ref{thm:main} is by an analysis of the hard-core model, a natural probability distribution on independent sets in a graph. In Section~\ref{sec:sharpness}, we give some indication that our application of this analysis is essentially tight. The same method was used for similar results specific to triangle-free graphs~\cite{DJPR18,DJKP20}; to an extent, the present work generalises that earlier work.

Theorem~\ref{thm:main}\ref{itm:fractional} and the results in~\cite{AIS18+} hint at their common strengthening.
\begin{conjecture}\label{conj:chi}
Given $\eps>0$, there exists $f_0$ such that, if $f_0 \le f \le \Delta^2+1$, then any graph of maximum degree $\Delta$ in which the neighbourhood of every vertex spans at most $\Delta^2/f$ edges has (list) chromatic number at most $(2+\varepsilon)\Delta/\log f$.
\end{conjecture}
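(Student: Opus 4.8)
We outline the approach we would take; the conjecture itself is open. The natural route is through the \emph{local occupancy} method, which converts a suitably robust lower bound on the hard-core occupancy fraction into an upper bound of the same leading order on the list chromatic number. The first task would be to localise the estimate behind Theorem~\ref{thm:main}\ref{itm:occupancy}: rather than only bounding $\EE|\bI|$ for $\bI$ drawn from the hard-core model at a fugacity $\lambda$ of order $(\log f)/\Delta$, one wants the per-vertex bound
\[
\Pr[v\in\bI]\;\ge\;\Bigl(\tfrac12-\eps\Bigr)\frac{\log f}{\Delta}
\]
at every vertex $v$, together with the stronger statement that this persists, up to a further $(1-o(1))$ factor, after the fugacity is perturbed arbitrarily within a bounded factor on the closed neighbourhood $N[v]$; this robustness is the actual content of the local occupancy hypothesis. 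The sparsity assumption $e(G[N(v)])\le\Delta^2/f$ should enter exactly as in the proof of Theorem~\ref{thm:main}: conditioning on the hard-core configuration outside $N[v]$ reduces the behaviour of $v$ and its neighbours to an auxiliary hard-core model on $G[N(v)]$, and one would re-run, vertex by vertex rather than on average, the local estimate that produces the factor $\tfrac12\log f$ there.

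With such a local occupancy bound in hand, the conclusion follows from a by-now-standard ``wasteful colouring'' (semi-random, or nibble) analysis: assign lists of size $L=(2+\eps)\Delta/\log f$, iterate a partial-colouring step in which each colour class is a near-hard-core independent set in the graph induced by the uncoloured vertices, uncolour conflicts, and maintain the invariant that every uncoloured vertex still has at least $(1+\eps')$ times its uncoloured degree many available colours. The local occupancy estimate is precisely what ensures that the ratio of available colours to residual degree does not deteriorate from round to round; after $O_\eps(1)$ rounds the residual instance is finished greedily, or one lets the process run to its natural end and applies the Lov\'asz Local Lemma. This machinery treats lists and ordinary colours on the same footing, so it would yield the list version asserted.

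The main obstacle is the localisation-and-robustness step. The proof of Theorem~\ref{thm:main} optimises a global average, and its extremal configurations --- near-regular, near-bipartite neighbourhoods --- need not be worst possible for the local quantity at a single vertex; a priori one neighbourhood could be tuned to spoil the per-vertex inequality while the global average stays large. Ruling this out seems to require redoing the hard-core analysis with the fugacities free to range over a bounded interval and showing the $\tfrac12\log f$ gain survives uniformly. This is most delicate near the endpoint $f=\Delta^2+1$, where $G[N(v)]$ carries at most one edge and the estimate must reproduce the list-colouring analogue of Molloy's triangle-free bound $\chi_\ell\le(1+o(1))\Delta/\log\Delta$, and it is also where keeping the leading constant at exactly $2$ is most fragile, since an integral colouring procedure typically concedes lower-order terms that the fractional relaxation in Theorem~\ref{thm:main}\ref{itm:fractional} avoids. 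A direct Molloy--Bernshteyn-style entropy-compression argument, bypassing the occupancy bound, is a plausible alternative, but it too rests on controlling how often colours repeat inside each sparse neighbourhood, so the essential difficulty is unchanged.
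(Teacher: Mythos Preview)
The statement is stated in the paper as a \emph{conjecture}, not a theorem; the authors give no proof and explicitly present it as a hoped-for common strengthening of Theorem~\ref{thm:main}\ref{itm:fractional} and the results of Achlioptas--Iliopoulos--Sinclair. You correctly flag this at the outset, so there is nothing to compare your attempt against: the paper has no proof of this statement.

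Your sketch of a research programme via local occupancy plus a semi-random or entropy-compression colouring procedure is a reasonable one, and indeed is the route along which this conjecture was later attacked. One point, however, is inaccurate. You write that ``the proof of Theorem~\ref{thm:main} optimises a global average'' and that the main obstacle is to ``localise'' the occupancy bound to a per-vertex inequality. In fact the paper already establishes precisely such a per-vertex inequality: equation~\eqref{eqn:hcmboundlocal} in Lemma~\ref{lem:hcmbound} gives, for \emph{every} vertex $v$, a lower bound on $\alpha\Pr(v\in\bI)+\beta\,\EE|N(v)\cap\bI|$, and this is exactly what drives the fractional bound via Lemma~\ref{lem:chifalg}. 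So the localisation step you identify as the principal difficulty is already done here. The genuine gap between the paper's fractional result and the conjectured list-chromatic result is not localisation but rather (a) the robustness of the local inequality under the vertex-by-vertex fugacity perturbations that a coupon-collector or entropy-compression argument introduces, and (b) the integration of such a bound into a list-colouring procedure that keeps the leading constant at $2$ rather than losing a further factor. Your discussion of obstacles is therefore somewhat misdirected, though the overall shape of the proposed approach is sound.
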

\noindent
%It is unclear to what extent our methods apply for small fixed $f$, a situation first considered (for chromatic number) by Molloy and Reed~\cite{MoRe97}.
In Section~\ref{sec:basic}, motivated by quantitative Ramsey theory, we briefly discuss a more basic problem setting in terms of bounded triangle fraction.

\subsection{Notation and preliminaries}\label{sub:prelim}

We write $\cI(G)$ for the set of independent sets (including the empty set) of a graph $G$.

Given $\lam>0$, the \emph{hard-core model on $G$ at fugacity $\lam$} is a probability distribution on $\cI(G)$, where each $I\in\cI(G)$ occurs with probability proportional to $\lam^{|I|}$. Writing $\bI$ for the random independent set, we have
\[
\Pr(\bI=I) = \frac{\lam^{|I|}}{Z_G(\lam)},
\]
where the normalising term in the denominator is the \emph{partition function} (or \emph{independence polynomial}) $Z_G(\lam)=\sum_{I\in\cI(G)} \lam^{|I|}$. 
The \emph{occupancy fraction} is $\EE|\bI|/|V(G)|$. Note that this is a lower bound on the proportion of vertices in a largest independent set of $G$.

There are many equivalent definitions for a fractional colouring of $G$, the most relevant one for our purposes being the following. A \emph{fractional colouring} of $G$ is a weight assignment $w: \cI(G) \to [0,1]$ which satisfies that for every vertex $v\in V(G)$, the sum of the weights of all the independent sets that contain $v$ is at least $1$. The total weight $w(G)$ of $w$ is then the sum of all the weights of the independent sets of $G$. 
The fractional chromatic number $\chi_f(G)$ of $G$ is then the minimum total weight of a fractional colouring of $G$. It is straightforward to show that $|V(G)|/\chi_f(G)$ is a lower bound on the size of a largest independent set in $G$.

%If $\mu$ denotes the standard Lesbegue measure on $\RR$, then a \emph{fractional colouring} of $G$ is an assignment $w:\cI(G) \to 2^{\RR}$ of pairwise disjoint measurable subsets of $\RR$ to independent sets such that $\sum_{I\in\cI(G), I \ni v}\mu(w(I))\ge 1$  for all $v\in V(G)$. 
%The total weight of the fractional colouring is $\sum_{I\in\cI(G)}\mu(w(I))$.
%The \emph{fractional chromatic number} $\chi_f(G)$ of $G$ is the minimum total weight in a fractional colouring of $G$.
%Note that the reciprocal is a lower bound on the proportion of vertices in a largest independent set of $G$.

We have presented in an earlier work~\cite[Lemma 3]{DJKP20} how to construct a fractional colouring of a graph $G$ using a probability distribution over the independent sets of any induced subgraph $H$ of $G$. Informally speaking, if this probability distribution has the property that for every vertex $v\in H$, either $v$ has a good chance of belonging to the corresponding random independent set $\bI$, or the expected number of neighbours of $v$ in $\bI$ is large, then we obtain an upper bound on $\chi_f(G)$. This is done using a greedy fractional colouring algorithm, and we refer the reader to~\cite{DJKP20} or~\cite[Section 21.3]{MoRe02} for a description of this algorithm.
Note that the statement in~\cite{DJKP20} is significantly more general than what we need here: to obtain the statement below we take $r=1$, rename $\alpha_1,\alpha_2$ to $\alpha,\beta$, and trade the colour-measure definition of fractional colouring for the conceptually simpler weighted independent set version. 
The conclusion of~\cite[Lemma 3]{DJKP20} gives what we want here as when each vertex $v$ is fractionally coloured with an interval of the form $[0, b_v)$ (for some parameters $b_v>0$) the total weight of colour needed is $\max_{v\in V} \{b_v\}$ which implies $\chi_f(G)\le \max_{v\in V}\{b_v\}$.

\begin{lemma}[{\cite{DJKP20}, cf.~also~\cite[Section 21.3]{MoRe02}}]\label{lem:chifalg}
Let $G$ be a graph of maximum degree $\Delta$, and $\alpha,\beta>0$ be positive reals. 
Suppose that for every induced subgraph $H \subseteq G$, there is a probability distribution on $\cI(H)$ such that, writing $\bI_H$ for the random independent set from this distribution, for each $v\in V(H)$ we have
\[
\alpha\Pr(v\in\bI_H) + \beta\EE|N_H(v)\cap \bI_H| \ge 1.
\]
Then $\chi_f(G)\le \alpha +\beta\Delta$.
\end{lemma}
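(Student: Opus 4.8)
The plan is to exploit the linear-programming duality characterisation of the fractional chromatic number: a graph $G$ satisfies $\chi_f(G)\le t$ precisely when, for every probability distribution $p$ on $V(G)$, there is an independent set $I\in\cI(G)$ with $p(I):=\sum_{v\in I}p(v)\ge 1/t$ (this is the dual of the covering LP whose optimum is $\chi_f(G)$). So I would fix $t=\alpha+\beta\Delta$ and an arbitrary probability distribution $p$ on $V(G)$, and aim to produce an independent set of $p$-weight at least $1/t$. Since the hypothesis is quantified over all induced subgraphs and $\cI(H)\subseteq\cI(G)$ for induced $H$, I may first pass to $H=G[\operatorname{supp}p]$ and thereby assume $p>0$ everywhere on $V(H)$.

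I would then apply the hypothesis to $H$ to obtain a distribution on $\cI(H)$, write $\bI$ for the random independent set, multiply the inequality $\alpha\Pr(v\in\bI)+\beta\,\EE|N_H(v)\cap\bI|\ge1$ by $p(v)$, and sum over $v\in V(H)$. Using $\sum_v p(v)=1$ and rearranging the neighbourhood term gives the master inequality
\[
1\;\le\;\alpha\,\EE\bigl[p(\bI)\bigr]\;+\;\beta\,\EE\Bigl[\,\textstyle\sum_{u\in\bI}p(N_H(u))\Bigr].
\]
Since $\EE[p(\bI)]\le\max_{I\in\cI(H)}p(I)$, the first term is already of the right shape, and everything reduces to extracting from this a value $\max_{I\in\cI(H)}p(I)$ (equivalently, an independent set) of size at least $1/t$.

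That extraction, with the sharp constant $t=\alpha+\beta\Delta$, is the step I expect to be the real obstacle. The crude bound $\sum_{u\in I}p(N_H(u))=\sum_v p(v)|N_H(v)\cap I|\le\Delta$, valid because $p$ is a probability distribution and $G$ has maximum degree $\Delta$, is too lossy: on its own it yields nothing once $\beta\Delta\ge1$, and even refined accountings of the ``neighbourhood mass'' seem to cost a further factor of order $\Delta$. To recover the linear dependence I would argue by induction on $\Delta$ (with the case $\Delta=0$, which forces $\alpha\ge1$, as the base case and a routine reduction handling distributions that are not of full support), using that for any independent set $I$ the set $N_H(I)$, and more generally the threshold sets $\{v:|N_H(v)\cap I|\ge j\}$, induce subgraphs of strictly smaller maximum degree, to which the inductive estimate applies. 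The delicate point — and what I expect to need genuine care, perhaps choosing the auxiliary distributions on these subgraphs adaptively rather than by naive restriction — is to make the recursion close with no loss at all, so that the constant remains exactly $\alpha+\beta\Delta$.
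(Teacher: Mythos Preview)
The paper does not actually prove this lemma; it is quoted from \cite{DJKP18+} (cf.\ also \cite{MoRe02}). The standard argument behind it, however, is quite different from your proposal, and your proposal has a genuine gap.

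Your LP-duality reduction and the master inequality are correct, and you are right that for an independent set $I$ and $j\ge1$ the threshold set $\{v:|N_H(v)\cap I|\ge j\}$ has maximum degree at most $\Delta-j$, since each such $v$ has $j$ neighbours in $I$ while $I$ itself is disjoint from the threshold set. But the induction you sketch does not close. The inductive hypothesis would yield $\chi_f\le\alpha+\beta(\Delta-j)$ for the threshold set, i.e.\ for every probability measure \emph{on that set} there is an independent set of large mass; this says nothing useful about the $p$-mass of independent sets of $G$, because $p$ restricted to the (random, $\bI$-dependent) threshold set is not a probability measure, and renormalising it destroys the bookkeeping needed to sum the pieces back to $1/(\alpha+\beta\Delta)$. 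More structurally, your argument as written invokes the hypothesis only for the single subgraph $G[\operatorname{supp}p]$, whereas the quantification over \emph{all} induced subgraphs is essential and in the intended proof is used for an entire nested sequence of them.

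The intended proof is a greedy potential argument. Maintain for each vertex a remaining demand $r_v$, initially $1$; at each moment let $H$ be the subgraph induced by $\{v:r_v>0\}$, and spend colour mass at unit rate according to the hypothesised distribution on $\cI(H)$, so that $r_v$ decreases at rate $\Pr(v\in\bI_H)$. The potential $\Phi_v=\alpha r_v+\beta\sum_{u\in N_G(v)}r_u$ starts at most $\alpha+\beta\Delta$, stays nonnegative, and while $v\in H$ decreases at rate
\[
\alpha\Pr(v\in\bI_H)+\beta\sum_{u\in N_H(v)}\Pr(u\in\bI_H)=\alpha\Pr(v\in\bI_H)+\beta\,\EE|N_H(v)\cap\bI_H|\ge1,
\]
since inactive neighbours contribute to neither side. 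Hence every vertex is fully coloured by time $\alpha+\beta\Delta$, giving $\chi_f(G)\le\alpha+\beta\Delta$ with no loss in the constant.
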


In fact the method we use to lower bound occupancy fractions follows a similar principle: we show that either a vertex is likely to be in the random independent set $\bI$ or the expected number of its neighbours $\bI$ is large. 
These techniques were applied in~\cite{DJKP20} to the special case of triangle-free graphs and here we show that the arguments permit substantially greater generality. 
This is a significant step.  For when lower bounding the independence number one can delete vertices, e.g.\ to remove triangles, and look in the remaining graph. But this does not work immediately in our setting as e.g.\ fractional colouring requires a condition to be satisfied for every vertex. 

Essentially we show that the quantitative properties of the hard-core model that underlie the results of~\cite{DJKP20} degrade ``smoothly'' in the presence of triangles. 
In order to do this we pose and solve a more involved optimisation problem, and this hints at the strength of our approach: it is sufficiently general that carefully chosen values of the parameters yield stronger conclusions than seemingly ``more natural'' choices. 
That is, we state Theorem~\ref{thm:main}\ref{itm:occupancy} in terms of a uniform random independent set, while, on the other hand, na\"ively carrying out our argument with $\lam=1$ (which corresponds to a uniform random independent set) does not yield a conclusion of the desired strength; a smaller value of $\lam$ must be chosen.
That is, Theorem~\ref{thm:occupancy} below gives a lower bound on $\EE|\bI|$ that is not monotone increasing in $\lam$ despite $\EE|\bI|$ itself being monotone increasing in $\lam$.

The function $W:[-1/e,\infty)\to[-1,\infty)$ is the inverse of $z\mapsto ze^z$, also known as the \emph{Lambert $W$ function}. It is monotone increasing and satisfies (see e.g.~\cite{CGH+96})%
\renewcommand{\theequation}{\Roman{equation}}%
\begin{align}
W(x)=\log x - \log\log x + o(1) &\text{ and} \label{Wprop1}\\
W((1+o(1))x)=W(x)+o(1) &\text{ as $x\to\infty$, and} \label{Wprop2}\\
e^{-W(y)}=W(y)/y &\text{ for all $y$}.\label{Wprop3}
\end{align}
\noindent
All logarithms in the present paper are natural.

\renewcommand{\theequation}{\arabic{equation}}
\section{The main result}\label{sec:sparsenbhds}

We next discuss our main result, Theorem~\ref{thm:main}, in slightly deeper context. We in fact show a sharp, general lower bound on occupancy fraction for graphs of bounded local triangle fraction, to which Theorem~\ref{thm:main}\ref{itm:occupancy} is corollary.

\begin{theorem}\label{thm:occupancy}
Suppose $f,\Delta,\lam$ satisfy that $f\le \Delta^2+1$ and, as $f\to\infty$, that
\[
\Delta\log(1+\lam)=\omega(1)
\quad\text{ and }\quad
\frac{2{(\Delta\log(1+\lam))}^2}{fW(\Delta\log(1+\lam))}=o(1).
\]
In any graph $G$ of maximum degree $\Delta$ in which the neighbourhood of every vertex spans at most $\Delta^2/f$ edges, 
writing $\bI$ for the random independent set from the  hard-core model on $G$ at fugacity $\lam$, the occupancy fraction satisfies
\[
\frac{1}{|V(G)|}\EE|\bI| \ge (1+o(1))\frac{\lam}{1+\lam}\frac{W(\Delta\log(1+\lam))}{\Delta\log(1+\lam)}.
%(1+o(1))\frac{\log(\Delta\lam)}{\Delta}.
\]
%Given $\eps>0$, there exists $f_0$ such that, for all $f_0<f\le \Delta^2+1$ and every $\lam$ satisfying
%$\Delta \log(1+\lam)\ge f^{1/(2+\eps/2)}$,
%the following holds for any graph $G$ of maximum degree $\Delta$ in which the neighbourhood of every vertex in $G$ spans at most $\Delta^2/f$ edges. 
%Writing $\bI$ for the random independent set from the  hard-core model on $G$ at fugacity $\lam$, the occupancy fraction satisfies
%\[
%\frac{1}{|V(G)|}\EE|\bI| \ge (1/2-\eps) \frac{\log f}{\Delta}.
%\]
\end{theorem}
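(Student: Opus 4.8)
My plan follows the occupancy-method strategy: analyse the hard-core model from the viewpoint of a single vertex, then average. Fix $v\in V(G)$ and expose $\bI$ on $V(G)\setminus N[v]$; let $A=A_v\subseteq N(v)$ be the set of neighbours of $v$ with no exposed neighbour in $\bI$. Conditioned on this exposure, $\bI\cap N[v]$ is a hard-core sample at fugacity $\lam$ on $G[\{v\}\cup A]$, and since $A\subseteq N(v)$ a short computation gives
\[
\Pr(v\in\bI\mid\mathrm{exp.})=\frac{\lam}{\lam+Z_{G[A]}(\lam)},\qquad
\EE\bigl[\,|\bI\cap N(v)|\mid\mathrm{exp.}\,\bigr]=\frac{q_{G[A]}(\lam)}{\lam+Z_{G[A]}(\lam)},
\]
where $q_H(\lam):=\lam Z_H'(\lam)=\sum_{I\in\cI(H)}|I|\lam^{|I|}$. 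Averaging over a uniformly random $v$ and writing $\alpha:=\EE|\bI|/|V(G)|$ for the occupancy fraction, the first identity gives $\alpha=\EE\bigl[\lam/(\lam+Z_{G[A]})\bigr]$, while $\sum_v\EE|\bI\cap N(v)|=\sum_u\deg(u)\Pr(u\in\bI)\le\Delta|V(G)|\alpha$ together with the second identity gives $\EE\bigl[q_{G[A]}/(\lam+Z_{G[A]})\bigr]\le\Delta\alpha$. Combining these with a multiplier $\kappa>0$,
\[
\alpha\ \ge\ \frac{1}{1+\kappa\Delta}\,\EE\!\left[\frac{\lam+\kappa\, q_{G[A]}(\lam)}{\lam+Z_{G[A]}(\lam)}\right]\ \ge\ \frac{1}{1+\kappa\Delta}\,\min_{J}\frac{\lam+\kappa\, q_J(\lam)}{\lam+Z_J(\lam)}\ =:\ \frac{\min_J F(J)}{1+\kappa\Delta},
\]
the minimum being over all graphs $J$ with $|V(J)|\le\Delta$ and $e(J)\le\Delta^2/f$ (the edge bound because $G[A]$ is an induced subgraph of $G[N(v)]$).

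The key step is to show $\min_J F(J)\ge(1-o(1))\min_{0\le m\le\Delta}F(\overline{K_m})$, i.e.\ that an independent set is essentially extremal. Trivially $Z_J(\lam)\le(1+\lam)^{|V(J)|}$; and from $Z_J(\lam)=Z_{J-u}(\lam)+\lam Z_{J-N_J[u]}(\lam)\le(1+\lam)^{\deg_J(u)+1}Z_{J-N_J[u]}(\lam)$ and $q_J(\lam)=\sum_u\lam Z_{J-N_J[u]}(\lam)$ one obtains, writing $\beta=\lam/(1+\lam)$ and $m=|V(J)|$,
\[
q_J(\lam)\ \ge\ \beta\, Z_J(\lam)\!\sum_{u\in V(J)}(1+\lam)^{-\deg_J(u)}\ \ge\ \beta\, m\,(1+\lam)^{-2e(J)/m}Z_J(\lam)
\]
by Jensen. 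Set $\kappa':=\kappa\beta m(1+\lam)^{-2e(J)/m}$, so $\kappa q_J\ge\kappa'Z_J$. If $\kappa'\ge1$ then $F(J)\ge1\ge F(\overline{K_0})\ge\min_m F(\overline{K_m})$. If $\kappa'<1$ then $F(J)\ge(\lam+\kappa'Z_J)/(\lam+Z_J)$, which is decreasing in $Z_J$, so (using $Z_J\le(1+\lam)^m$, $q_{\overline{K_m}}(\lam)=\beta m(1+\lam)^m$ and $1-(1+\lam)^{-2e/m}\le 2e\log(1+\lam)/m$)
\[
F(J)\ \ge\ \frac{\lam+\kappa'(1+\lam)^m}{\lam+(1+\lam)^m}\ \ge\ F(\overline{K_m})-2\kappa\beta\, e(J)\log(1+\lam).
\]
With $\kappa$ of order $W(\Delta\log(1+\lam))/\Delta$ (from the optimisation below) and $e(J)\le\Delta^2/f$, the error $2\kappa\beta e(J)\log(1+\lam)$ is $O\bigl(\beta W(\Delta\log(1+\lam))\cdot\Delta\log(1+\lam)/f\bigr)$, which the second displayed hypothesis makes $o(1)$ times $\min_m F(\overline{K_m})$ (the latter being of order $\beta\, W(\Delta\log(1+\lam))^2/(\Delta\log(1+\lam))$); this yields the claim.

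It remains to analyse $\psi(m):=F(\overline{K_m})=\dfrac{\lam+\kappa\beta m(1+\lam)^m}{\lam+(1+\lam)^m}$ over $m\in[0,\Delta]$ and to pick $\kappa$. Writing $p=\log(1+\lam)$ and $t=(1+\lam)^m$, so $\psi=\dfrac{\lam+(\kappa\beta/p)\,t\log t}{\lam+t}$, a routine calculation shows the minimum over $m\in[0,\Delta]$ is attained at $t^\ast$ of order $\lam p/(\kappa\beta)$ — which lies in $[1,(1+\lam)^\Delta]$ under the hypotheses — with value of order $(\kappa\beta/p)\log\!\bigl(e\lam p/(\kappa\beta)\bigr)$, and that the choice $\kappa\Delta=W\bigl(\Delta(1+\lam)\log(1+\lam)\bigr)$ is optimal; substituting and using $e^{-W(y)}=W(y)/y$ gives
\[
\alpha\ \ge\ (1-o(1))\,\frac{\lam}{1+\lam}\cdot\frac{W\bigl(\Delta(1+\lam)\log(1+\lam)\bigr)}{\Delta\log(1+\lam)}.
\]
Finally $W\bigl(\Delta(1+\lam)\log(1+\lam)\bigr)=(1+o(1))W\bigl(\Delta\log(1+\lam)\bigr)$: since $\Delta\log(1+\lam)=\omega(1)$, and since $f\le\Delta^2+1$ with the second hypothesis forces $\log(1+\lam)=o\bigl(\sqrt{W(\Delta\log(1+\lam))}\bigr)=o\bigl(W(\Delta\log(1+\lam))\bigr)$, we get $W\bigl(\Delta(1+\lam)\log(1+\lam)\bigr)=W\bigl(\Delta\log(1+\lam)\bigr)+O(\log(1+\lam))=(1+o(1))W\bigl(\Delta\log(1+\lam)\bigr)$, which completes the plan.

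The hard part is the reduction to $\overline{K_m}$ in the second paragraph: because $\alpha$ and the constraint $\EE[q_{G[A]}/(\lam+Z_{G[A]})]\le\Delta\alpha$ are exactly in balance the analysis is tight, so the effect of the up-to-$\Delta^2/f$ edges in each $G[A]$ must be controlled quantitatively — precisely what the hypothesis $2(\Delta\log(1+\lam))^2/(fW(\Delta\log(1+\lam)))=o(1)$ buys — and the extremal calculus, though routine, must be carried out carefully enough to see the constant $1$ and the factor $\lam/(1+\lam)$ appear and to check that the minimising $m$ stays within $[0,\Delta]$.
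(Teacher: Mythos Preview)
Your proof is correct and rests on exactly the same ingredients as the paper's: the spatial Markov property giving the two local statistics $\Pr(v\in\bI)$ and $\EE|N(v)\cap\bI|$, the bound $Z_J\le(1+\lam)^{|V(J)|}$, and the Jensen lower bound on $q_J$ via average degree (which is precisely Lemma~\ref{lem:genhcm}\ref{itm:genhcm} applied to $\bF_v=G[A]$). The only difference is bookkeeping---the paper applies convexity early to collapse everything to a one-parameter min--max in $z=\EE|V(\bF_v)|$ (Lemma~\ref{lem:hcmbound}) and then solves $ye^y=\Lambda e^{2\Lambda^2/(fy)}$, whereas you carry the full local view $J$, combine via a multiplier $\kappa$, reduce to $\overline{K_m}$, and optimise $m$ then $\kappa$; these are dual formulations of the same optimisation and give the same asymptotic bound.
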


%\begin{customthm}{\ref{thm:main}\ref{itm:fractional}}\label{thm:fractional}
%Given $\eps>0$, there exists $f_0$ such that for all $f_0 \le f \le \Delta^2+1$ the following holds. In any graph $G$ of maximum degree $\Delta$ in which the neighbourhood of any vertex in $G$ spans at most $\Delta^2/f$ edges, the fractional chromatic number of $G$ satisfies
%\begin{align}
%\chi_f(G) \le (2+\eps)\frac{\Delta}{\log f}.
%\end{align}
%\end{customthm}

This may be viewed as generalising~\cite[Thm.~3]{DJPR18}.
By monotonicity of the occupancy fraction in $\lam$ (see e.g.~\cite[Prop.~1]{DJPR18}), and the fact that a uniform choice from $\cI(G)$ is a hard-core distribution with $\lam=1$, Theorem~\ref{thm:main}\ref{itm:occupancy} follows from Theorem~\ref{thm:occupancy} with $\lam = \sqrt{f}/\Delta$.
Theorem~\ref{thm:occupancy} is asymptotically optimal. %over all admissible choices of $f,\Delta,\lam$.
More specifically, in~\cite{DJPR18} it was shown how the analysis of~\cite{BST16} yields that, for any fugacity $\lam=o(1)$ in the range allowed in Theorem~\ref{thm:occupancy}, the random $\Delta$-regular graph (conditioned to be triangle-free) with high probability has occupancy fraction asymptotically equal to the bound in Theorem~\ref{thm:occupancy}.
In Section~\ref{sec:sharpness}, we show our methods break down for $\lam$ outside this range, so that new ideas are needed for any improvement in the bound for larger $\lam$.

Moreover, the asymptotic bounds of Theorems~\ref{thm:trianglecount} and~\ref{thm:main} cannot be improved, for any valid choice of $f$ as a function of $\Delta$, by more than a factor of between $2$ and $4$. This limits the hypothetical range of $\lam$ in Theorem~\ref{thm:occupancy}. This follows by considering largest independent sets in a random regular construction or in a suitable blow-up of that construction~\cite{She83}; see Section~\ref{sec:sharpness}.

Observe that Theorem~\ref{thm:main}\ref{itm:fractional} trivially fails with a global, rather than local, triangle fraction condition by the presence of a $(\Delta+1)$-vertex clique as a subgraph.
So Theorems~\ref{thm:trianglecount} and~\ref{thm:main} may appear incompatible, since the former has a global condition, while the latter has a local one. Nevertheless, either assertion in Theorem~\ref{thm:main} is indeed (strictly) stronger.

\begin{proof}[Proof of Theorem~\ref{thm:trianglecount}]
Without loss of generality we may assume that $\eps>0$ is small enough so that $(1/2-\eps^2)(1-3\eps^2)(1-\eps^2) \ge 1/2-\eps$.
Let $G$ be a graph on $n$ vertices of maximum degree $\Delta$ with at most $\Delta^2n/f$ triangles.
Call $v\in V(G)$ \emph{bad} if the number of triangles of $G$ that contain $v$ is greater than $\eps^{-2} \Delta^2/f$. Let $B$ be the set of all bad vertices. Note that $3\Delta^2n/f > |B| \eps^{-2}\Delta^2/f$ and so $|B| < 3\eps^2n$.
Let $H$ be the subgraph of $G$ induced by the subset $V(G)\setminus B$. Then $H$ is a graph of maximum degree $\Delta$ on at least $(1-3\eps^2)n$ vertices such that the neighbourhood of any vertex spans at most $\Delta^2/(\eps^2 f)$ edges. Provided we take $f$ large enough, either of~\ref{itm:occupancy} and~\ref{itm:fractional} in Theorem~\ref{thm:main} (with parameter $\eps$ in Theorem~\ref{thm:main} being $\eps^2$ here) implies that $H$, and thus $G$, contains an independent set of size
\begin{align}
(1/2-\eps^2)\frac{(1-3\eps^2)n}{\Delta}\log(\eps^2 f) 
&\ge (1/2-\eps^2)(1-3\eps^2)(1-\eps^2)\frac{n}{\Delta}\log f \\
&\ge (1/2-\eps)\frac{n}{\Delta}\log f,
\end{align}
where on the first line we used that $\eps^2 f \ge f^{1-\eps^2}$ for $f$ large enough.
\end{proof}

\section{An analysis of the hard-core model}\label{sec:hardcore}

A crucial ingredient in the proofs is an occupancy guarantee from the hard-core model, which we establish in Lemma~\ref{lem:hcmbound} below.
This refines an analysis given in~\cite{DJPR18}.
Given $G$, $I\in\cI(G)$, and $v\in V(G)$, let us call a neighbour $u\in N(v)$ of $v$ \emph{externally uncovered by $I$} if $u\notin N(I\setminus N(v))$.

\begin{lemma}\label{lem:genhcm}
Let $G$ be a graph and $\lam>0$. Let $\bI$ be an independent set drawn from the hard-core model at fugacity $\lam$ on $G$.
\begin{enumerate}
\item\label{itm:pv}
For every $v\in V(G)$, writing $\bF_v$ for the subgraph of $G$ induced by the neighbours of $v$ externally uncovered by $\bI$, %=G[N(v)\setminus N(\bI\setminus N(v))]
\[
\Pr(v\in\bI)\ge \frac{\lam}{1+\lam}{(1+\lam)}^{-\EE|V(\bF_v)|}.
\]
\item\label{itm:genhcm}
Moreover,
\[
\EE|\bI| \ge \frac{\lam}{1+\lam}|V(G)|{(1+\lam)}^{-\frac{2|E(G)|}{|V(G)|}}.
\]
\end{enumerate}
\end{lemma}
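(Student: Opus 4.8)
The plan is to localise around a single vertex using the conditional-independence (spatial Markov) property of the hard-core model, reduce to a one-variable real inequality, and then average using convexity.

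For \ref{itm:pv}, fix $v\in V(G)$ and condition on $S:=\bI\setminus(\{v\}\cup N(v))$, the trace of $\bI$ outside the closed neighbourhood of $v$. Given $S$, a vertex $u\in N(v)$ is still allowed to lie in $\bI$ precisely when $u\notin N(S)$; call this set $A=A(S)$, and note it is exactly the set of neighbours of $v$ externally uncovered by $S$. The conditional law of $\bI\cap(\{v\}\cup N(v))$ is then the hard-core model at fugacity $\lam$ on $G[\{v\}\cup A]$, a graph in which $v$ is joined to all of $A$. Writing $F=G[A]$ and $z=Z_F(\lam)$, this gives $\Pr(v\in\bI\mid S)=\lam/(\lam+z)$; moreover the neighbours of $v$ externally uncovered \emph{by $\bI$} are exactly $A$ when $v\notin\bI$ and none when $v\in\bI$, so $\EE[\,|V(\bF_v)|\mid S\,]=|A|\,z/(\lam+z)$. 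It therefore suffices to prove the pointwise bound
\[
\frac{\lam}{\lam+z}\ \ge\ \frac{\lam}{1+\lam}(1+\lam)^{-|A|z/(\lam+z)}\,,
\]
after which I take $\EE_S$: the left side becomes $\Pr(v\in\bI)$, and since $t\mapsto(1+\lam)^{-t}$ is convex, Jensen replaces $\EE_S[(1+\lam)^{-|A|z/(\lam+z)}]$ from below by $(1+\lam)^{-\EE|V(\bF_v)|}$, using the tower property for the exponent.

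The single real-analysis input — and the step I expect to be the main obstacle — is that pointwise bound. Rearranged it reads $(1+\lam)^{1+az/(\lam+z)}\ge\lam+z$ with $a=|A|$, and the only structural fact needed about $F$ is $1\le z=Z_F(\lam)\le(1+\lam)^{a}$ (the independence polynomial of an $a$-vertex graph is at most that of the empty graph on $a$ vertices). Setting $b=\log(1+\lam)$, the function $\phi(z)=b\,(1+az/(\lam+z))-\log(\lam+z)$ has derivative of sign $ab\lam/(\lam+z)-1$, so it is unimodal and its minimum on $[1,(1+\lam)^a]$ is attained at an endpoint. At $z=1$ one gets $\phi(1)=ab/(1+\lam)\ge0$ directly; at $z=(1+\lam)^a$, the substitution $x=ab$ reduces the claim to $H(x):=b+xe^x/(e^b-1+e^x)-\log(e^b-1+e^x)\ge0$ for $x\ge0$, and here $H(0)=0$ while a short computation gives $H'(x)=xe^x(e^b-1)/(e^b-1+e^x)^2\ge0$, which closes it.

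Part \ref{itm:genhcm} then follows by summing \ref{itm:pv} over $v$: $\EE|\bI|=\sum_v\Pr(v\in\bI)\ge\frac{\lam}{1+\lam}\sum_v(1+\lam)^{-\EE|V(\bF_v)|}$. A deterministic double count gives $\sum_v|V(\bF_v)|\le 2|E(G)|$ — an edge $uv$ contributes $1$ to $|V(\bF_v)|$ only if $u$ is externally uncovered by $\bI$ relative to $v$, hence at most $2$ to the whole sum — so $\sum_v\EE|V(\bF_v)|\le 2|E(G)|$. One final application of convexity (and monotonicity) of $t\mapsto(1+\lam)^{-t}$ pulls the average over the $|V(G)|$ vertices into the exponent, yielding $\EE|\bI|\ge\frac{\lam}{1+\lam}|V(G)|(1+\lam)^{-2|E(G)|/|V(G)|}$.
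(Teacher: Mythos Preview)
Your argument is correct, but for part~\ref{itm:pv} you are working much harder than the paper does. Both proofs condition spatially around $v$ and finish with Jensen, yet the paper avoids your real-analysis detour entirely. After establishing $\Pr(v\in\bI)=\tfrac{\lam}{1+\lam}\Pr(\bI\cap N(v)=\emptyset)$, the paper observes directly that $\Pr(\bI\cap N(v)=\emptyset)=\EE[1/Z_{\bF_v}(\lam)]$, and then uses only the trivial bound $Z_{\bF_v}(\lam)\le (1+\lam)^{|V(\bF_v)|}$ (the empty graph maximises the partition function) followed by one application of Jensen. No pointwise inequality of the form $(1+\lam)^{1+az/(\lam+z)}\ge\lam+z$ is needed; your analysis of $\phi$ and $H$ is extra work that buys nothing here. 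In effect, you conditioned one level finer (on $S=\bI\setminus N[v]$ rather than on $\bF_v$), which forced you to re-prove by hand what the coarser conditioning gives for free.

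For part~\ref{itm:genhcm} the two arguments coincide: your double count $\sum_v |V(\bF_v)|\le 2|E(G)|$ is just the inequality $|V(\bF_v)|\le\deg(v)$ summed, which is exactly what the paper uses before applying convexity.
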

\begin{proof}
The first part follows from two applications of the spatial Markov property of the hard-core model. 
This property is a general statement about the model which states that for $X\subset V(G)$, conditioned on some event of the form $\{\bI\cap (V(G)\setminus X)=J\}$, $\bI\cap X$ is distributed according to the hard-core model on $G[X\setminus N_G(J)]$.
This can be proved using the definition of the model: conditioned on $\{\bI\cap (V(G)\setminus X)=J\}$, the possible values for $\bI\cap X$ are precisely the independent sets in $G[X\setminus N_G(J)]$, and it can be verified from first principles that each occurs with the correct probability (see also e.g.~\cite{DJPR17}).
First, we have 
\[
\Pr(v\in\bI) = \frac{\lam}{1+\lam}\Pr(\bI\cap N(v) = \emptyset),
\]
because conditioned on a value $\bI\setminus\{v\}=J$ such that $J\cap N(v) = \emptyset$ there are two realisations of $\bI$, namely $J$ and $J\cup\{v\}$, giving
\[
\Pr(v\in\bI\mid J)=\frac{\lam^{|J|+1}}{\lam^{|J|}+\lam^{|J|+1}}=\frac{\lam}{1+\lam},
\]
and conditioned on $\bI\setminus\{v\}=J$ such that $J\cap N(v) \ne\emptyset$, $v$ cannot be in $\bI$.  
This is the spatial Markov property with $X=\{v\}$.

Second, the spatial Markov property with $X= N(v)$ gives that $\bI\cap N(v)$ is a random independent set drawn from the hard-core model on $\bF_v$. 
Then $\bI\cap N(v)=\emptyset$ if and only if this random independent set in $\bF_v$ is empty. It follows that
\[
\Pr(\bI\cap N(v) = \emptyset) = \EE \frac{1}{Z_{\bF_v}(\lam)} \ge \EE {(1+\lam)}^{-|V(\bF_v)|} \ge {(1+\lam)}^{-\EE|V(\bF_v)|},
\]
since the graph on $|V(\bF_v)|$ vertices with largest partition function is the graph with no edges, and by convexity. This completes the proof of~\ref{itm:pv}.

By the fact that $|V(\bF_v)|\le\deg(v)$ we also have for all $v\in V(G)$ that
\[
\Pr(v\in\bI) \ge \frac{\lam}{1+\lam}{(1+\lam)}^{-\deg(v)}.
\]
Then~\ref{itm:genhcm} follows by convexity:
\begin{align}
\EE|\bI| = \sum_{v\in V(G)}\Pr(v\in\bI) 
  &\ge \frac{\lam}{1+\lam}|V(G)|\sum_{v\in V(G)}{(1+\lam)}^{-\deg(v)} \\&\ge \frac{\lam}{1+\lam}|V(G)|{(1+\lam)}^{-\frac{2|E(G)|}{|V(G)|}}.\qedhere
\end{align}

\end{proof}

\begin{lemma}\label{lem:hcmbound}
Let $G$ be a graph of maximum degree $\Delta$ in which the neighbourhood of every vertex in $G$ spans at most $\Delta^2/f$ edges and $\lambda,\alpha,\beta>0$. 
Let $\bI$ be an independent set drawn from the hard-core model at fugacity $\lam$ on $G$.
Then we have, for every $v\in V(G)$,
\begin{align}\label{eqn:hcmboundlocal}
\alpha \Pr(v\in\bI) + &\beta\EE|N(v)\cap \bI| \\
&\ge 
\frac{\lam}{1+\lam}\min_{z\ge0}\left(\alpha{(1+\lam)}^{-z} + \beta z{(1+\lam)}^{-\frac{2\Delta^2}{fz}}\right).
\end{align}
Moreover,
\begin{align}
\label{eqn:hcmbound}
\frac{1}{|V(G)|}\EE|\bI| & \ge \frac{\lam}{1+\lam} \min_{z\ge 0}\max\left\{{(1+\lam)}^{-z}, \frac{z}{\Delta}{(1+\lam)}^{-\frac{2\Delta^2}{fz}}\right\}.
\end{align}
\end{lemma}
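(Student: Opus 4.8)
The plan is to combine the pointwise estimates of Lemma~\ref{lem:genhcm} with two uses of convexity. Write $n=|V(G)|$, and for each $v\in V(G)$ put $Z_v=\EE|V(\bF_v)|$, the expected number of neighbours of $v$ externally uncovered by $\bI$. Define $\phi\colon[0,\infty)\to[0,\infty)$ by $\phi(0)=0$ and $\phi(z)=z(1+\lam)^{-2\Delta^2/(fz)}$ for $z>0$. Writing $\phi(z)=ze^{-c/z}$ with $c=(2\Delta^2/f)\log(1+\lam)>0$, a direct computation gives $\phi''(z)=c^2z^{-3}e^{-c/z}>0$ on $(0,\infty)$, while $\phi(z),\phi'(z)\to0$ as $z\to0^+$; hence $\phi$ is convex on $[0,\infty)$.

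For the local bound~\eqref{eqn:hcmboundlocal} I would treat the two terms separately. Lemma~\ref{lem:genhcm}\ref{itm:pv} gives $\Pr(v\in\bI)\ge\tfrac{\lam}{1+\lam}(1+\lam)^{-Z_v}$ outright. For the second term, the spatial Markov property (as used in the proof of Lemma~\ref{lem:genhcm}) identifies $\bI\cap N(v)$, conditioned on $\bF_v$, with the hard-core model on $\bF_v$; applying Lemma~\ref{lem:genhcm}\ref{itm:genhcm} to $\bF_v$ (the case $V(\bF_v)=\emptyset$ being trivial) yields
\[
\EE\bigl[\,|\bI\cap N(v)| \mid \bF_v\,\bigr] \ge \frac{\lam}{1+\lam}|V(\bF_v)|(1+\lam)^{-2|E(\bF_v)|/|V(\bF_v)|}\,.
\]
Since $\bF_v$ is an induced subgraph of $G[N(v)]$ we have $|E(\bF_v)|\le\Delta^2/f$, and $x\mapsto(1+\lam)^{-x}$ is decreasing, so the right-hand side is at least $\tfrac{\lam}{1+\lam}\phi(|V(\bF_v)|)$; taking expectations and applying Jensen's inequality with the convexity of $\phi$ gives $\EE|N(v)\cap\bI|\ge\tfrac{\lam}{1+\lam}\phi(Z_v)$. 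Adding the two bounds with weights $\alpha,\beta$ and then minimising over $z=Z_v\ge0$ proves~\eqref{eqn:hcmboundlocal}.

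For~\eqref{eqn:hcmbound} I would sum the two pointwise estimates over $V(G)$ and average the $Z_v$. Set $\bar Z=\tfrac1n\sum_{v}Z_v$. Summing $\Pr(v\in\bI)\ge\tfrac{\lam}{1+\lam}(1+\lam)^{-Z_v}$ and using convexity of $z\mapsto(1+\lam)^{-z}$ gives $\EE|\bI|\ge\tfrac{\lam}{1+\lam}\,n\,(1+\lam)^{-\bar Z}$. On the other hand $\sum_{v}\EE|N(v)\cap\bI|=\sum_{u}\deg(u)\Pr(u\in\bI)\le\Delta\,\EE|\bI|$, so summing $\EE|N(v)\cap\bI|\ge\tfrac{\lam}{1+\lam}\phi(Z_v)$ and using convexity of $\phi$ gives $\EE|\bI|\ge\tfrac{\lam}{1+\lam}\tfrac{n}{\Delta}\phi(\bar Z)$. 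Hence $\tfrac1n\EE|\bI|$ is at least the maximum of $\tfrac{\lam}{1+\lam}(1+\lam)^{-\bar Z}$ and $\tfrac{\lam}{1+\lam}\tfrac{\bar Z}{\Delta}(1+\lam)^{-2\Delta^2/(f\bar Z)}$, which is at least the minimum of that maximum over $z\in\RR^+$ (if $\bar Z=0$ then every $Z_v=0$, so $\Pr(v\in\bI)\ge\lam/(1+\lam)$ for all $v$ and the bound is immediate).

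The convexity computation and the double-counting are routine; the point that needs care is the passage to the \emph{average} $\bar Z$ in~\eqref{eqn:hcmbound}. Arguing vertex by vertex — using for each $v$ only that the maximum of $\tfrac{\lam}{1+\lam}(1+\lam)^{-Z_v}$ and $\tfrac{\lam}{1+\lam}\tfrac{Z_v}{\Delta}(1+\lam)^{-2\Delta^2/(fZ_v)}$ is at least the claimed minimum — loses a factor of up to $2$ (a balanced complete bipartite graph shows this is unavoidable from the pointwise information alone), whereas feeding $\bar Z$ into both convex functions loses nothing.
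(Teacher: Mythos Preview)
Your argument is correct and matches the paper's proof essentially step for step: Lemma~\ref{lem:genhcm}\ref{itm:pv} for $\Pr(v\in\bI)$, Lemma~\ref{lem:genhcm}\ref{itm:genhcm} applied conditionally to $\bF_v$ followed by Jensen for $\EE|N(v)\cap\bI|$, and then for~\eqref{eqn:hcmbound} the same double-counting $\sum_v\EE|N(v)\cap\bI|\le\Delta\,\EE|\bI|$ together with convexity of both $z\mapsto(1+\lam)^{-z}$ and your $\phi$ to pass to the average $\bar Z$. Your explicit convexity check for $\phi$ and the handling of the $\bar Z=0$ boundary case are slightly more careful than the paper's presentation, but the substance is identical.
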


\begin{proof}
Write $\bF_v$ for the graph induced by the neighbours of $v$ externally uncovered by $\bI$ and $z_v=\EE|V(\bF_v)|$. 
By Lemma~\ref{lem:genhcm}\ref{itm:pv} we have
\[
\Pr(v\in\bI) \ge \frac{\lam}{1+\lam}{(1+\lam)}^{-z_v}.
\]
For the other term, we apply Lemma~\ref{lem:genhcm}\ref{itm:genhcm} to the graph $\bF_v$, for which by assumption $\frac{2|E(\bF_v)|}{|V(\bF_v)|} \le \frac{2\Delta^2}{f|V(\bF_v)|}$. If $\bJ$ is an independent set drawn from the hard-core model at fugacity $\lam$ on $\bF_v$,
then by convexity
\begin{align}
\EE|N(v)\cap\bI| = \EE|\bJ|
& \ge \frac{\lam}{1+\lam}\EE\left[|V(\bF_v)|{(1+\lam)}^{-\frac{2\Delta^2}{f|V(\bF_v)|}}\right]\\
& \ge \frac{\lam}{1+\lam}z_v{(1+\lam)}^{-\frac{2\Delta^2}{fz_v}},
\end{align}
so~\eqref{eqn:hcmboundlocal} follows.
For~\eqref{eqn:hcmbound}, by above we may bound $\EE|\bI|$ in two distinct ways:
\begin{align}
\EE|\bI|
& \ge |V(G)| \frac{\lam}{1+\lam}{(1+\lam)}^{-z} \quad \text{and}\\
\EE|\bI| 
& \ge \frac{1}{\Delta}\sum_{v\in V(G)}\sum_{u\in N(v)}\Pr(u\in \bI) 
 \ge \frac{1}{\Delta}\sum_{v\in V(G)}\EE|N(v)\cap\bI| \\
& \ge \frac{|V(G)|}{\Delta}\frac{\lam}{1+\lam}z{(1+\lam)}^{-\frac{2\Delta^2}{fz}}
\end{align}
where $z=\frac{1}{|V(G)|}\sum_{v\in V(G)} z_v$ is the expected number of externally uncovered neighbours of a uniformly random vertex and for the final inequality we use the convexity of $x\mapsto x e^{-k/x}$ on $[0,\infty)$ for any $k \in \mathbb{R}$. Now~\eqref{eqn:hcmbound} follows.
\end{proof}

\section{The proofs}\label{sec:proofs}

\begin{proof}[Proof of Theorem~\ref{thm:occupancy}]
In order to obtain the desired result we evaluate the right hand side of~\eqref{eqn:hcmbound} in Lemma~\ref{lem:hcmbound}.
For fixed positive parameters $\lambda,\Delta,f$,
the first argument of the maximisation in~\eqref{eqn:hcmbound} is decreasing from $1$ to $0$ as $z$ grows from $0$ to $+\infty$, while the second is increasing from $0$ to $+\infty$. Therefore, there exists a unique $z_0>0$ satisfying
\begin{align}\label{eqn:z}
{(1+\lam)}^{-z_0} = \frac{z_0}{\Delta} {(1+\lam)}^{-\frac{2\Delta^2}{fz_0}},
\end{align}
and the minimum in $\eqref{eqn:hcmbound}$ is attained at $z=z_0$. 
We now seek a suitable upper bound on $z_0$, and change variables in order to simplify the above equation.
Writing $\Lambda=\Delta\log(1+\lam)$ and $y=z_0\log(1+\lam)$, we can rewrite~\eqref{eqn:z} as
\[
ye^y = \Lambda e^{\frac{2\Lambda^2}{fy}} \ge \Lambda,
\]
where the lower bound comes from the fact that all fixed parameters are positive.
Using the fact that $W$ is monotone increasing we obtain $W(\Lambda) \le y$, which we reinject into the equation in order to obtain
\begin{align*}
ye^y &= \Lambda e^{\frac{2\Lambda^2}{fy}} \le \Lambda e^{\frac{2\Lambda^2}{fW(\Lambda)}},\text{ and hence}\\
y &\le W\Big(\Lambda e^{\frac{2\Lambda^2}{fy}}\Big).
\end{align*}

In terms of the original parameters, this upper bound becomes
\begin{align}
z_0\log(1+\lam) \le W\Big(\Delta\log(1+\lam)e^{\frac{2{(\Delta\log(1+\lam))}^2}{fW(\Delta\log(1+\lam))}}\Big).
\end{align}
By the assumptions of Theorem~\ref{thm:occupancy}, and using property~\eqref{Wprop2} of $W$, we have that, as $f\to \infty$, 
\begin{align}\label{eqn:Lambda}
\hspace{16pt} z_0\log(1+\lam)\le W(\Delta\log(1+\lam)(1+o(1))) = W(\Delta\log(1+\lam))+o(1).
\end{align}
Substituting this into the first argument in~\eqref{eqn:hcmbound}, and using property~\eqref{Wprop3} of $W$, we obtain as $f\to\infty$
\begin{align}
\frac{1}{|V(G)|}\EE|\bI|
& \ge \frac{\lam}{1+\lam}e^{-W(\Delta\log(1+\lam))+o(1)} 
= (1+o(1))\frac{\lam W(\Delta\log(1+\lam))}{(1+\lam)\Delta\log(1+\lam)}.\qedhere
\end{align}
\end{proof}

\begin{proof}[Proof of Theorem~\ref{thm:main}\ref{itm:fractional}]
Supposing that we have chosen positive values of $\alpha$, $\beta$, and $\lam$, write
\[
g(x) = \frac{\lam}{1+\lam}\left(\alpha{(1+\lam)}^{-x} + \beta x {(1+\lam)}^{-\frac{2\Delta^2}{fx}}\right).
\]
%Observe that $g(0)=\frac{\lambda}{1+\lambda} \le 1$, and $\lim_{x\to \infty}\limits g(x) = +\infty$, hence by the intermediate value theorem there exists $z_0 \in \RR^+$ such that $g(z_0)=1$. 
By~\eqref{eqn:hcmboundlocal} in Lemma~\ref{lem:hcmbound} and Lemma~\ref{lem:chifalg}, we have $\chi_f(G)\le \alpha+\beta\Delta$, provided $g(x)\ge 1$ for all $x\ge0$.
It is easy to verify that with $\alpha,\beta,\lam>0$ the function $g$ is strictly convex, so the minimum of $g(x)$ occurs when $g'(x)=0$, or 
\[
{(1+\lam)}^{-x} = \frac{\beta}{\alpha}\left(\frac{1}{\log(1+\lam)} + \frac{2\Delta^2}{fx}\right){(1+\lam)}^{-\frac{2\Delta^2}{fx}}.
\]
As before, let $z_0$ be the unique solution over $\RR^+$ of~\eqref{eqn:z}, and hence~\eqref{eqn:Lambda} is again satisfied. Then by choosing
\begin{align}\label{eq:alphabeta}
\frac{\alpha}{\beta} = \frac{\Delta}{z_0}\left(\frac{1}{\log(1+\lam)} + \frac{2\Delta^2}{fz_0}\right),
\end{align}
the minimum of $g$ occurs at $z_0$. Observe that, if~\eqref{eq:alphabeta} is satisfied, then $g(z_0) \to 0$ as $\alpha \to 0$, and $g(z_0) \to +\infty$ as $\alpha \to +\infty$, therefore we can express values of $\alpha,\beta > 0$ in terms of  $\lam$, $\Delta$, and $f$ such that the equations $g(z_0)=1$ and~\eqref{eq:alphabeta} are satisfied.
Using~\eqref{eqn:z}, this means
\[
g(z_0) = \frac{\lam}{1+\lam} {(1+\lam)}^{-z_0}(\alpha+\beta\Delta) = 1,
\]
and hence by Lemma~\ref{lem:chifalg} we obtain
\[
\chi_f(G) \le \alpha+\beta\Delta = \frac{1+\lam}{\lam} {(1+\lam)}^{z_0}.
\]
We let $\lam$ be such that $\Delta\log(1+\lam)=\sqrt{f}=\omega(1)$ as $f\to\infty$. In this case, by~\eqref{eqn:Lambda} we know that $z_0\log(1+\lam) \le W(\Delta \log(1+\lam))+o(1)$.
Thus
\begin{align*}
\chi_f(G) \le \frac{1+\lam}{\lam} {(1+\lam)}^{z_0} &\le \frac{1+\lam}{\lam} e^{W(\Delta \log(1+\lambda))+o(1)} & \text{by~\eqref{eqn:Lambda}}\\
&\le (1+o(1)) \frac{\Delta}{\sqrt{f}}e^{W\big(\sqrt{f}\big)} & \text{by definition of $\lambda$}\\
&\le (1+o(1)) \frac{\Delta}{W(\sqrt{f})} & \text{by~\eqref{Wprop3}}\\
&\le (1+o(1)) \frac{\Delta}{\log \sqrt{f}} &\text{by~\eqref{Wprop1} }\\
&< (2+\eps)\frac{\Delta}{\log f},
\end{align*}
provided that $f_0$ is taken large enough.
\end{proof}

\section{Sharpness}\label{sec:sharpness}

\subsection{Occupancy fraction}

Since the occupancy fraction is increasing in $\lam$, it might be intuitive that the lower bound on occupancy fraction that results from the proof of Theorem~\ref{thm:occupancy} is also increasing.
This is true only up to a point, just as in~\cite{DJPR18}. 
Already for $\lam$ slightly larger than admissible for Theorem~\ref{thm:occupancy}, under mild assumptions, the method breaks down in the sense that the resulting lower bound is asymptotically smaller. 
In this case a novel analysis would be necessary; there is almost no slack in our treatment of~\eqref{eqn:z}. 

Assume as $f\to\infty$ that $\lambda = o(1)$, $\Delta =f^{O(1)}$,
\[
\lam = \omega\left(\frac{\sqrt{f{(\log f)}^3}}{\Delta}\right) \quad\text{ and }\quad \lam = o\left(\frac{f\log f}{\Delta}\right).
\]
The choice of $\lam$ used to obtain Theorem~\ref{thm:main} is just shy of the above range. 
%The extremes of~\eqref{eqn:Lambda} satisfy, as $f\to\infty$, respectively that
%\[
%z\lam = (1+o(1))\log(\Delta\lam) \quad \text{ or } \quad z\lam = (1+o(1))\frac{2\Delta^2\lam^2}{f\log(\Delta\lam)}.
%\]
Substituting the extremes of the interval~\eqref{eqn:Lambda} into the second argument of~\eqref{eqn:hcmbound}, we derive the following two expressions as $f\to\infty$, the larger of which necessarily bounds the best guarantee to expect from our approach.
First,
\begin{align}
\frac{\lam}{1+\lam}&\frac{z}{\Delta}\left.{(1+\lam)}^{-\frac{2\Delta^2}{fz}}\right|_{z\log(1+\lam) = W(\Delta\log(1+\lam))} \\
& =(1+o(1))\frac{\log(\Delta\lam)}{\Delta}\exp\left(-(1+o(1))\frac{2\Delta^2\lam^2}{f\log(\Delta\lam)}\right)\\
& \le \Delta^{-1}\exp\left(-\omega\big({(\log f)}^2\big)\right)= o\left(\frac{\log(\Delta\lam)}{\Delta}\right).
\shortintertext{Second, using the properties of $W$ and the assumed bounds on $\lam$,}
\frac{\lam}{1+\lam}&\frac{z}{\Delta}\left.{(1+\lam)}^{-\frac{2\Delta^2}{fz}}\right|_{z\log(1+\lam) = W\Big(e^{\frac{2{(\Delta\log(1+\lam))}^2}{fW(\Delta\log(1+\lam))}+\log(\Delta\log(1+\lam))}\Big)} \\
%& = (1+o(1))\frac{2\Delta\lam^2}{f\log(\Delta\lam)}\exp\left(-\frac{2(\Delta\log(1+\lam))^2}{fW\left(\Delta\log(1+\lam)e^{\frac{2(\Delta\log(1+\lam))^2}{fW(\Delta\log(1+\lam))}}\right)}\right) \\
& = (1+o(1))\frac{2\Delta\lam^2}{f\log(\Delta\lam)}\exp\left(-\frac{2{(\Delta\log(1+\lam))}^2}{fW\Big(e^{\frac{2{(\Delta\log(1+\lam))}^2}{fW(\Delta\log(1+\lam))}+\log(\Delta\log(1+\lam))}\Big)}\right) \\
%& = (1+o(1))\frac{2\Delta\lam^2}{f\log(\Delta\lam)}\exp\left(-\frac{2(\Delta\log(1+\lam))^2}{\frac{2(\Delta\log(1+\lam))^2}{W(\Delta\log(1+\lam))}-(1+o(1))f\log(\Delta\lam)+f\log f}\right) \\
& = (1+o(1))\frac{2\Delta\lam^2}{f\log(\Delta\lam)}\exp\left(-\frac{W(\Delta\log(1+\lam))}{1-(1+o(1))\frac{f\log(\Delta\lam/f)\log(\Delta\lam)}{2{(\Delta\lam)}^2}}\right) \\
%& = (1+o(1))\frac{2\Delta\lam^2}{f\log(\Delta\lam)}\left(\frac{W(\Delta\log(1+\lam))}{\Delta\log(1+\lam)}\right)^{\frac{1}{1-(1+o(1))\frac{f\log(\Delta\lam/f)\log(\Delta\lam)}{2(\Delta\lam)^2}}} \\
& = (1+o(1))\frac{2\Delta\lam^2}{f\log(\Delta\lam)}{\left(\frac{\log(\Delta\lam)}{\Delta\lam}\right)}^{\frac{1}{1-(1+o(1))\frac{f\log(\Delta\lam/f)\log(\Delta\lam)}{2{(\Delta\lam)}^2}}} \\
& = (1+o(1))\frac{2\lam}{f} = o\left(\frac{\log(\Delta\lam)}{\Delta}\right).
\end{align}
%The larger of these bounds the best guarantee to expect from our approach.
%However, both are $o(\Delta^{-1}\log (\Delta\lam))$ as $f\to\infty$.

\subsection{Independence number and fractional chromatic number}

By an analysis of the random regular graph~\cite{FrLu92}, there are triangle-free $\Delta$-regular graphs $G_\Delta$ in which as $\Delta\to\infty$ the largest independent set has size at most 
\[
(2+o(1))\frac{|V(G_\Delta)|}{\Delta}\log \Delta.
\]
So $G_\Delta$ certifies Theorems~\ref{thm:trianglecount} and~\ref{thm:main} to be sharp up to an asymptotic factor $2$ provided $f=\Delta^{2-o(1)}$ as $\Delta\to\infty$. For smaller $f$, let us for completeness reiterate an observation from~\cite{She83}. For $f<\Delta/2$, let $d=f-1$ and let $bG_d$ be the graph obtained from $G_d$ by substituting each vertex with a clique of size $b=\lfloor\Delta/f\rfloor$. Then $bG_d$ is regular of degree $bf-1\le\Delta$ such that each neighbourhood contains at most $b^2f/2\le \Delta^2/(2f)$ edges, and so  $bG_d$ has at most $\Delta^2|V(bG_d)|/f$ triangles. In $G_d$ the largest independent set is of size
\begin{align}
(2+o(1))\frac{|V(G_d)|}{d}\log d = (2+o(1))\frac{|V(bG_d)|}{\Delta}\log f.
\end{align}
The same is clearly true in $bG_d$, and this is an asymptotic factor $4$ greater than the lower bound in Theorem~\ref{thm:trianglecount}. Last, observe that for $f\ge \Delta/2$ and $f\le\Delta^{2-\Omega(1)}$, $G_\Delta$ certifies that Theorems~\ref{thm:trianglecount} and~\ref{thm:main} are at most an asymptotic factor $4$ from extremal, and so this holds throughout the range of $f$.

\section{A more basic setting}\label{sec:basic}

Due to the close link with off-diagonal Ramsey numbers, we wonder, what occurs when we drop the degree bounding parameter?
This may be variously interpreted. For example, over graphs on $n$ vertices with at most $n^3/f$ triangles, what is the best lower bound on the independence number?

One can deduce nearly the correct answer to an alternative, local version of this question: over graphs on $n$ vertices such that each vertex $v$ is contained in at most $\deg(v)^2/f$ triangles, where $2\le f\le (n-1)^2+1$, what is the best lower bound on the independence number?
%(This leads to some natural and related colouring problems we pose in the next paragraph.)
By a comparison of Theorem~\ref{thm:trianglecount} and Tur\'{a}n's theorem (applied to a largest neighbourhood), as $f\to\infty$ there must be an independent of size at least
\begin{align}\label{eqn:basic}
%\frac{f n \log f}{n \log f+\sqrt{n \log f (2 f^2+n \log f)}} = 
(1+o(1))\frac{f}{1+\sqrt{1+\frac{2f^2}{n\log \min\{f,n\}}}}.
\end{align}
This expression is asymptotic to $f/2$ if $f = o(\sqrt{n\log n})$ and asymptotic to $\sqrt{0.5n\log \min\{f,n\}}$ if $f = \omega(\sqrt{n\log n})$, so this in particular extends Shearer's bound on off-diagonal Ramsey numbers to cover any $f\ge n^{1-o(1)}$.
Over the range of $f$ as a function of $n$,~\eqref{eqn:basic} is asymptotically sharp up to some reasonably small constant factor by considering the final output of the triangle-free process~\cite{BoKe13+,FGM13+} or a blow-up of that graph by cliques.

We remark that the same argument as above yields a similar bound as in~\eqref{eqn:basic} for a more local version, where $f=\deg(v)^a+1$ for some fixed $a\in[0,2]$.

By repeatedly extracting independent sets of the size guaranteed in~\eqref{eqn:basic} (cf.~\cite[Lemma~4.1]{CJKP18+}), 
it follows that as $f\to\infty$ the chromatic number is at most
\begin{align}
(2+o(1))\left(1+\sqrt{1+\frac{2f^2}{n\log \min\{f,n\}}}\right)\frac{n}{f}.
\end{align}
%\marginpar{In fact, the constant $2$ gets better in the first regime for small $f$.}
Is this the correct asymptotic order for the largest list chromatic number?
Is the extra factor $2$ unnecessary for the chromatic number? Even improving the bound only for the fractional chromatic number by a factor $2$ would be very interesting.
This generalises recent conjectures of Cames van Batenburg and a subset of the authors~\cite{CJKP18+}, for the triangle-free case $f=(n-1)^2+1$.

\subsection*{Acknowledgements}
We are grateful to Matthew Jenssen, Will Perkins and Barnaby Roberts for helpful discussions, particularly in relation to the analysis in Section~\ref{sec:hardcore}.
We are thankful to the anonymous referees for their thorough reading and helpful comments.

\subsection*{Added remarks}

Recalling the definition of fractional colouring given in Subsection~\ref{sub:prelim}, it is sometimes useful to give a concrete meaning to the weights in terms of colours. In this case, the set of colours is the interval $[0,w(G)]$, and each independent set $I\in \cI(G)$ is assigned a disjoint subinterval of length $w(I)$. This is necessary in order to consider \emph{local} fractional colourings, where each vertex has only a subinterval of allowed colours whose length depends on some ``local'' parameters, such as its degree. A version of the present work expressed in this extra generality can be found in the doctoral thesis of the last author~\cite{Pir19}. 

Moreover, following this work, Conjecture~\ref{conj:chi} has been confirmed in a stronger form within a general framework (which also includes local colouring among other things) due to Sereni and three of the authors~\cite{DKPS20+}.
\bibliography{sparsenbhds}
\bibliographystyle{habbrv}

\end{document}